\newcommand{\F}{\mathbb F}
\newcommand{\R}{\mathbb R}
\newtheorem{theorem}{Theorem}[section]
\newtheorem{proposition}[theorem]{Proposition}
\newtheorem{definition}[theorem]{Definition}
\newtheorem{remark}[theorem]{Remark}
\newtheorem{afirm}[theorem]{Affirmation}
\newtheorem{conjecture}[theorem]{Conjecture}
\newenvironment{customthm}[1]
  {\innercustomthm}
  {\endinnercustomthm}
\begin{document}
\title[Universality theorem]{Universality theorem for LNE H\"older triangles}

\author[L. Birbrair]{Lev Birbrair}
\address{Departamento de Matem\'atica, Universidade Federal do Cear\'a (UFC), Campus do Pici, Bloco 914, Cep.~60455-760, Fortaleza-Ce, Brasil}
\address{Faculty of Mathematics and Computer Science, Jagiellonian University, Prof.~Stanis\l awa \L ojasiewicza 6, 30-348, Krak\'ow, Poland}
\email{lev.birbrair@gmail.com}

\author[M. Denkowski]{Maciej Denkowski}
\address{Faculty of Mathematics and Computer Science, Jagiellonian University, Prof.~Stanis\l awa \L ojasiewicza 6, 30-348, Krak\'ow, Poland}
\email{maciej.denkowski@uj.edu.pl}

\author[D. L. Medeiros]{Davi Lopes Medeiros}
\address{Departamento de Matem\'atica, Universidade Federal do Cear\'a (UFC), Campus do Pici, Bloco 914, Cep.~60455-760, Fortaleza-Ce, Brasil}
\email{profdavilopes@gmail.com}

\author[J. E. Sampaio]{Jos\'e Edson Sampaio}
\address{Departamento de Matem\'atica, Universidade Federal do Cear\'a (UFC), Campus do Pici, Bloco 914, Cep.~60455-760, Fortaleza-Ce, Brasil}\email{edsonsampaio@mat.ufc.br}

\thanks{The last named author was partially supported by CNPq-Brazil grant 310438/2021-7 and supported by the Serrapilheira Institute (grant number Serra -- R-2110-39576).
}
\begin{abstract}
    We compare ambient and outer Lipschitz geometry of Lipschitz normally embedded H\"older triangles in $\R^4$. In contrast to the case of $\R^3$ there are infinitely many equivalence classes. The equivalence classes are related to the so-called microknots.
\end{abstract}
\subjclass{51F30, 14P10, 03C64, 57K10}
\maketitle

\tableofcontents
\section{Introduction}
The paper is devoted to the metric properties of real semialgebraic (or definable in a polynomially bounded o-minimal structure) surfaces. We study the difference between the \emph{outer} and \emph{ambient} bi-Lipschitz equivalence of semialgebraic surface germs at the origin in $\R^4$. Two surface germs are outer bi-Lipschitz equivalent if they are bi-Lipschitz equivalent as abstract metric spaces with the outer metric $d(x,y)=\|x-y\|$.
 Ambient bi-Lipschitz equivalence means that there exists a germ of a bi-Lipschitz homeomorphism of the ambient space mapping one of them to the other one. 
Birbrair and Gabrielov in \cite{BG:2019} showed that ambient bi-Lipschitz equivalence is different from outer bi-Lipschitz equivalence even when there are no topological obstructions. More precisely, they showed that {\it there are two germs at the origin of semialgebraic surfaces $X_1$ and $X_2$ in $\mathbb{R}^3$ that are outer bi-Lipschitz equivalent and ambient topologically equivalent, but are not ambient bi-Lipschitz equivalent} (see \cite[Theorem 2.1]{BG:2019}). 
Indeed, these two equivalence relations are reasonably different. This is evidenced by the Universality Theorem proved by Birbrair, Brandenbursky and Gabrielov in \cite[Theorem 3.1]{withMisha} (see also \cite{handbook}):
\begin{theorem}[Universality Theorem] Let $K \subset \mathbb{S}^3$ be a knot. Then one can associate
to $K$ a semialgebraic one-bridge surface germ $(X_K , 0)$ in $\R^4$ so that the following holds:
\begin{itemize}                                                                                    \item [1.] The link at the origin of each germ $X_K$ is a trivial knot;                                                                                               \item [2.] All germs $X_K$ are outer bi-Lipschitz equivalent;                                                                                               \item [3.] Two germs $X_{K_1}$ and $X_{K_2}$ are ambient semialgebraic bi-Lipschitz equivalent only if the knots $K_1$ and $K_2$ are isotopic.                                                                                                 \end{itemize}
\end{theorem}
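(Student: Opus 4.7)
My plan is to encode the knot $K$ as an intrinsic feature of a semialgebraic surface germ in $\R^4$ in such a way that it is invisible to the outer metric but obstructs any ambient bi-Lipschitz equivalence, while keeping the link at the origin trivial. First I would fix a semialgebraic parametrization $\gamma\colon[0,1]\to\mathbb{S}^3$ of $K$ and form $X_K=T\cup B_K$, where $T$ is a standard flat H\"older triangle in $\R^4$ and $B_K$ is a thin ``bridge strip'' attached along one edge of $T$, whose core curve is, on the sphere of radius $r$, a scaled copy of $\gamma$ placed at a H\"older exponent $\beta>1$ relative to $r$. Squeezing the bridge against the base at a strictly super-linear rate forces the section $X_K\cap\mathbb{S}^3_\varepsilon$ to be an unknotted circle for every small $\varepsilon$, giving (1).

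For (2) I would build explicit bi-Lipschitz maps $X_K\to X_0$ (where $X_0$ is the trivial one-bridge model) by straightening the knot along the bridge strip; because the strip has uniformly bounded fibrewise geometry, this unfolding has bounded distortion, so all $X_K$ lie in a single outer bi-Lipschitz class. For (3) suppose an ambient bi-Lipschitz germ $\Psi\colon(\R^4,0)\to(\R^4,0)$ sends $X_{K_1}$ to $X_{K_2}$. Using a pancake/horn decomposition and the fact that bi-Lipschitz maps respect H\"older exponents, I would argue that $\Psi$ must preserve the bridge's H\"older rate and hence map the ``microknot'' of $X_{K_1}$ at each sufficiently small scale to that of $X_{K_2}$ at a comparable scale. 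A rescaling/compactness argument then produces a bi-Lipschitz self-homeomorphism of $\R^4$ taking $K_1$ to $K_2$; since bi-Lipschitz homeomorphisms of $\R^4$ are ambient isotopic to the identity, this yields an isotopy between $K_1$ and $K_2$ in $\mathbb{S}^3$.

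The main obstacle will be Step (3): extracting a topological isotopy statement from a purely metric equivalence. The delicate point is to show that $\Psi$ cannot ``twist away'' the microknot at arbitrarily small scales, so one needs (i) a rigidity statement saying that the bridge and the base carry distinct H\"older invariants that $\Psi$ must preserve, preventing any mixing between them, and (ii) a limit/compactness step that upgrades a sequence of bi-Lipschitz images of the microknot, taken on shrinking spheres and renormalized, to an honest ambient homeomorphism of $\mathbb{S}^3$ carrying $K_1$ to $K_2$. Everything else is bookkeeping with semialgebraic parametrizations and bounded distortions.
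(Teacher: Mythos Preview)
This statement is not proved in the present paper; it is quoted from \cite{withMisha} as background. The paper's own contribution is the analogous Theorem~\ref{main} for LNE H\"older triangles, and that is the only proof available here for comparison.

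Your construction is close in spirit to the paper's hornification (Definition~\ref{Def: hornification}): a copy of $K$ is carried at scale $t^\beta$ inside a horn neighbourhood of a line, so that the link at the origin is an unknot while the knot type survives as a microknot at every scale. Parts (1) and (2) of your outline are essentially what the paper does for its variant, via the explicit $\R^*_+$-action $\Theta_\beta$ and Proposition~\ref{Prop:hornification-LNE}.

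For part (3), however, the paper does \emph{not} run a rescaling/compactness argument to extract a limiting self-homeomorphism of $\R^4$. Instead it closes the triangle off with an auxiliary flat $1$-triangle $T'$ (Definition~\ref{Def:microknot}) so that $Link(T_{\beta,K}\cup T')$ is isotopic to $K$; an ambient bi-Lipschitz $\varphi$ then carries this closed-up surface for $K_1$ to one for $K_2$, and a direct comparison of links gives the isotopy. This bypasses the step you correctly identify as delicate. Your proposed route has a genuine gap: the assertion that ``bi-Lipschitz homeomorphisms of $\R^4$ are ambient isotopic to the identity'' is neither standard nor what you actually need, and the limit of rescalings of a bi-Lipschitz germ at $0$ is in general only a bi-Lipschitz map of the \emph{tangent cone} (cf.\ \cite{Sampaio-tangent}), not of all of $\R^4$; since the tangent cone of $X_K$ is a half-line when $\beta>1$, the microknot is invisible in that limit. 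The paper's ``close up and read the link'' trick is precisely the device that avoids this problem.
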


Here the {\it link} of a germ $(X,0)$ in $\mathbb{R}^n$ (Definition \ref{Def: Link at the origin}) is the intersection $X\cap\mathbb{S}_\varepsilon^{n-1}$, for $\varepsilon>0$ small enough --- by the Local Conical Structure Theorem, it is uniquely determined up to a bi-Lipschitz homeomorphism. In the Theorem cited above, although the links at the origin of all surface germs $X_K$ are trivial knots, the map $K\mapsto X_K$ from the set of all isotopy classes of knots in $\mathbb{S}^3$ to the set of ambient bi-Lipschitz equivalence classes of surface germs in $\mathbb{R}^4$ is injective.
In particular, this theorem implies that the ambient bi-Lipschitz classification in this case ``contains all of Knot Theory''.

In this article, we prove a Universality Theorem where $X_K$ has the simplest topology. Indeed, we show that we can take $X_K$ to be an LNE H\"older triangle in $\R^4$, which is the most simple object of the metric theory of singularities. Recall that a subset $X\subset \R^n$ is called {\bf Lipschitz normally embedded} ({\bf LNE}, for short) if its inner metric is equivalent to its outer metric. This condition is not satisfied for the aforementioned example of Birbrair and Gabrielov.

An LNE $\beta$-H\"older triangle $T\subset \R^n$ (resp. LNE $\beta$-horn $H\subset \R^n$) is called {\bf ambient Lipschitz trivial}  if it is ambient bi-Lipschitz equivalent to a standard $\beta$-H\"older triangle (resp. LNE $\beta$-horn) in a two (resp. three) dimensional subspace in $\R^n$. Recently, Birbrair and Medeiros \cite{withDavi} have shown that any LNE $\beta$-H\"older triangle and LNE $\beta$-horn in $\R^3$ are ambient Lipschitz trivial. The results of \cite{extention} prove that for $n>4$ any LNE $\beta$-H\"older triangle and LNE $\beta$-horn are also ambient Lipschitz trivial. 
Here, the sets are assumed to be definable in a polynomially bounded o-minimal structure over $\mathbb{R}$ with the field of exponents $\mathbb{F}$.
Thus, we can summarize the above-mentioned results from \cite{withDavi} and \cite{extention} as the following:

\begin{theorem}\label{thm:trivial_holder_triangle}
	For any $\beta \in \mathbb{F}_{\ge 1}$, $n \in \mathbb{N}_{\ge 3}$ with $n\ne 4$, every LNE $\beta$-H\"older triangle $T \subset \R^n$ and every LNE $\beta$-horn $H \subset \R^n$ are ambient Lipschitz trivial.
\end{theorem}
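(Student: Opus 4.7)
The plan is to decompose the argument by ambient dimension, treating the cases $n = 3$ and $n \geq 5$ separately, and running the H\"older triangle and the horn arguments in parallel.

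For $n = 3$ I would follow the strategy of Birbrair--Medeiros~\cite{withDavi}. The idea is to use the two boundary arcs of $T$ to produce a definable parametrization of $T$ by a one-parameter family of arcs $\{\gamma_t\}_{t \in [0,1]}$ emanating from the origin, with $\gamma_0$ and $\gamma_1$ the two boundary arcs. The LNE condition implies that the inner distance along $T$ between any two such arcs equals their outer distance, which rules out any ``twisting'' of the family in $\R^3$. One then builds a definable ambient bi-Lipschitz homeomorphism $\Phi$ of $\R^3$ that sends each $\gamma_t$ linearly onto the corresponding arc of a standard $\beta$-H\"older triangle $T_0$ in a fixed $2$-plane. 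The extension of $\Phi$ off $T$ is carried out using a tubular neighborhood coming from the local conical structure theorem, interpolated with the identity outside a small neighborhood of $T$. The argument for an LNE horn $H \subset \R^3$ is parallel, with the $2$-dimensional model replaced by a $3$-dimensional standard horn.

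For $n \geq 5$ the codimension of $T$ is at least $3$, and the plan is to reduce to the previous case by ``flattening'' $T$ into a three-dimensional affine subspace of $\R^n$. This is a general-position argument: in codimension $\geq 3$ any one-parameter family of arcs is unknotted, so the arc family associated to $T$ may be pushed by a definable ambient bi-Lipschitz isotopy of $\R^n$ into $\R^3 \times \{0\}$, with the Lipschitz constants along the isotopy controlled by the LNE constant of $T$ (this is what prevents the usual topological flattening from being merely a homeomorphism). The target of the isotopy is an LNE $\beta$-H\"older triangle in $\R^3$, to which the preceding step applies. This is essentially the content of~\cite{extention}, and the horn case is entirely analogous.

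The genuine obstacle, and the reason for the exclusion $n \neq 4$, lies in the critical codimension $2$. One-parameter families of arcs in $\R^4$ can carry nontrivial braiding and knotting invariants---the \emph{microknots} referenced in the abstract---which no ambient bi-Lipschitz isotopy can undo. Hence the flattening step above fails precisely when $n = 4$, and the construction of infinitely many inequivalent examples in $\R^4$ is the very project of the remainder of the paper.
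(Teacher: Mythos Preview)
The paper does not actually prove this statement. Theorem~\ref{thm:trivial_holder_triangle} is presented in the introduction as a summary of results established elsewhere: the case $n=3$ is attributed to Birbrair--Medeiros~\cite{withDavi} and the case $n>4$ to Birbrair--Fernandes--Jelonek~\cite{extention}. No argument is given in the body of the paper. So your proposal is not to be compared against a proof here, but against those external references.

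That said, your high-codimension sketch does not match the mechanism that~\cite{extention} supplies. That paper is an \emph{extension} result for bi-Lipschitz maps: in sufficiently high codimension, an outer bi-Lipschitz map between definable germs in $\R^n$ extends to an ambient bi-Lipschitz homeomorphism of $\R^n$. Applied here, the LNE hypothesis promotes the inner bi-Lipschitz equivalence (from the definition of $\beta$-H\"older triangle or $\beta$-horn) to an outer one, and the extension theorem then directly yields the ambient equivalence with the standard model. There is no intermediate reduction to the $n=3$ case. Your route---flatten into $\R^3\times\{0\}$ by an ambient bi-Lipschitz isotopy, then invoke~\cite{withDavi}---would need you to \emph{produce} that isotopy with Lipschitz control, and the sentence ``in codimension $\ge 3$ any one-parameter family of arcs is unknotted'' is a topological, not a metric, statement; the passage from topological unknotting to a bi-Lipschitz flattening is exactly the hard step, and it is not clear your outline supplies it. The extension-theorem approach sidesteps this entirely.
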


In this paper, we investigate the case $n=4$ and we obtain the following surprising result (see the notion of a microknot in Definition \ref{Def:microknot}):
\begin{customthm}{\ref*{main}} 
For any knot $K$ and $\beta \in \mathbb{F}_{> 1}$, there exists an LNE H\"older triangle $T_{\beta, K} \subset \R^4$, containing a $\beta$-microknot, isotopic to $K$. Moreover, two triangles $T_{\beta, K_1}$ and $T_{\beta, K_2}$ are ambient bi-Lipschitz equivalent only if $K_1$ and $K_2$ are isotopic.
\end{customthm}

 The above result gives also a negative answer to the following conjecture stated by Birbrair and Gabrielov in \cite{BG:2019}:
 \begin{conjecture}\label{conj:BG}
 Let $(X_0,0)$ and $(X_1,0)$ be two LNE semialgebraic surface germs which are ambient topologically equivalent and outer bi-Lipschitz equivalent. Then $X_0$ and $X_1$ are ambient bi-Lipschitz equivalent.
 \end{conjecture}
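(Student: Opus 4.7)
The plan is to adapt the strategy of the Universality Theorem of Birbrair--Brandenbursky--Gabrielov \cite{withMisha} to the more restrictive setting of LNE H\"older triangles, by encoding the knot at a ``microscopic'' scale $r^\beta$ rather than at the link scale $r$, and then showing that this microscopic datum is a complete ambient bi-Lipschitz invariant up to isotopy.

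\textbf{Construction of $T_{\beta,K}$.} Starting from the standard model $\beta$-H\"older triangle $T_0=\{(x,y,0,0)\in\R^4:0\le x\le 1,\,0\le y\le x^\beta\}$, I would deform $T_0$ inside the tubular slab $\{|z_3|,|z_4|\le x^\beta\}$ by twisting an interior arc of $T_0$ so that, at each level $x=x_0$, the deformation traces out --- after rescaling by $x_0^{-\beta}$ --- a curve in $\R^3$ isotopic to $K$. Concretely, one can realize $T_{\beta,K}$ as the image of a definable Lipschitz parametrization
\[
\Psi(x,t)=\bigl(x,\,y(x,t),\,x^\beta\alpha(t),\,x^\beta\gamma(t)\bigr),\qquad (x,t)\in[0,1]^2,
\]
where $y(x,t)$ fixes the H\"older profile $0\le y\le x^\beta$ and $(\alpha,\gamma)$ encodes the knot through a bump-supported modification concentrated away from the boundary arcs. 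By design, the intersection of $T_{\beta,K}$ with a small $3$-sphere, suitably rescaled, realizes a $\beta$-microknot (in the sense of Definition \ref{Def:microknot}) isotopic to $K$.

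\textbf{LNE property.} The LNE property of $T_{\beta,K}$ follows from direct analysis of $\Psi$. Its differential has size $O(x^{\beta-1})$ in the $x$-direction and $O(x^\beta)$ in the $t$-direction, which matches the intrinsic metric on a $\beta$-H\"older triangle. Consequently, any two points of $T_{\beta,K}$ can be joined by a path inside $T_{\beta,K}$ whose length is bounded by a uniform constant times the outer distance between them.

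\textbf{Invariance (main obstacle).} The central difficulty is the converse: if $\Phi:(\R^4,0)\to(\R^4,0)$ is an ambient bi-Lipschitz homeomorphism with $\Phi(T_{\beta,K_1})=T_{\beta,K_2}$, then $K_1$ must be isotopic to $K_2$. The strategy is to show that $\Phi$ induces a topological equivalence of the microknots. Since $\Phi$ distorts distances by only bounded factors, it preserves the hierarchy of scales $r^\beta\ll r$ along which the microknot is extracted. By composing $\Phi$ with the natural radial rescalings at scale $r^\beta$ and passing to a limit via an Arzel\`a--Ascoli argument, one should obtain a bi-Lipschitz (hence topological) self-homeomorphism of a $3$-sphere carrying the microknot of $T_{\beta,K_1}$ to that of $T_{\beta,K_2}$ up to isotopy. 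I expect the main technical work to lie in this step: making the rescaling rigorous in the definable framework, controlling the regularity of the limit map, and verifying that the microscopic scale $r^\beta$ is genuinely preserved by ambient bi-Lipschitz maps of LNE H\"older triangles --- the LNE hypothesis being crucial to rule out the collapse of different scales, which is what prevents an analogous statement for the non-LNE examples of \cite{BG:2019}.
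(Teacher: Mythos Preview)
First, a clarification: the statement in question is a \emph{conjecture that the paper disproves}, so what is being compared is your proposed route to a counterexample versus the paper's.

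Your plan is essentially to prove the full Universality Theorem for LNE H\"older triangles (the paper's Theorem~\ref{main}) and deduce the failure of the conjecture as a corollary. That would work, but it is far more than is needed, and the hardest step you identify---the invariance of the microknot under ambient bi-Lipschitz maps via a rescaling-and-limit argument---is not how the paper proceeds at either level.

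For the conjecture itself, the paper takes a much shorter path (Theorem~\ref{Cor:conjecture-false}): it compares the closed-up surface $Y_K=T_{\beta,K}\cup T'$ with the straight cone $\tilde Y_K=\mathrm{Tr}_0\tilde K$. Both are LNE $1$-horns whose link is isotopic to $K$, so they are outer bi-Lipschitz equivalent and ambient topologically equivalent. The obstruction is simply the \emph{tangent cone}: since $\beta>1$, the hornified part of $Y_K$ collapses to a half-line and $C_0Y_K$ is ambient trivial, whereas $C_0\tilde Y_K=\tilde Y_K$ still has link $K$. Sampaio's theorem \cite{Sampaio-tangent} (ambient bi-Lipschitz maps induce ambient bi-Lipschitz maps of tangent cones) finishes immediately. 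No microknot invariance, no rescaling limits, no Arzel\`a--Ascoli.

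Even for the Universality Theorem itself, the paper avoids your analytic step. The key design choice---which your construction lacks---is that the boundary arcs of $T_{\beta,K}$ are placed at the \emph{macroscopic} scale $r$ (via the planar $1$-H\"older triangles $T_{\theta_i}$), not at scale $r^\beta$. Then one can close up $T_{\beta,K}$ by any ambient-trivial $1$-H\"older triangle $T'$ joining those boundary arcs, and the link of $T_{\beta,K}\cup T'$ is already isotopic to $K$. An ambient bi-Lipschitz map $\varphi$ sends $T_{\beta,K_1}\cup T'_{K_1}$ to $T_{\beta,K_2}\cup\varphi(T'_{K_1})$; since $\varphi(T'_{K_1})$ is again an ambient-trivial cap, the links are isotopic to $K_1$ and $K_2$ respectively, forcing $K_1\simeq K_2$. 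The knot is read off at the ordinary link, purely topologically.

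Your proposed invariance argument---rescale by $r^{-\beta}$ along an arc and pass to a limit---has a genuine gap. You would need a ``higher-order tangent cone'' along a moving center $\gamma(t)$ to exist and to be functorial under ambient bi-Lipschitz maps. Uniform bi-Lipschitz bounds give equicontinuity of the rescaled maps, but the centers $\gamma(t)$ and $\Phi(\gamma(t))$ need not match the arc parametrizations on the image side, and it is not clear that any limit map respects the specific knotted arc rather than some translate or degeneration of it. The paper's capping trick sidesteps all of this; if you keep your parametrized construction with boundary arcs at distance $x^\beta$, you lose exactly the structure that makes the easy argument work.
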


 In this paper, we obtain also a counterexample to Conjecture \ref{conj:BG} even for germs of LNE surfaces with isolated singularities. Indeed, we obtain even more as stated in the next result:
\begin{customthm}{\ref*{Cor:conjecture-false}}
For any non-trivial knot $K$ there exists two LNE surfaces $Y_K, \tilde Y_K \in \R^4$, with isolated singularity at $0$, such that
\begin{enumerate}
	\item $Y_K$ and $\tilde Y_K$ are outer bi-Lipschitz equivalent;
	\item $Y_K$ and $\tilde Y_K$ are ambient topologically equivalent;
	\item $Y_K$ and $\tilde Y_K$ are not ambient bi-Lipschitz equivalent.
\end{enumerate}
\end{customthm}


\section{Preliminaries}
Throughout the paper \emph{definable} means definable in a polynomially bounded o-minimal structure over $\mathbb{R}$ with the field of exponents $\mathbb{F}$.
All sets, functions and maps in this text are assumed to be definable. Unless the contrary is explicitly stated, we consider only germs at the origin of all sets and maps.

\begin{definition}\label{Def:tangentcone}\normalfont
	Given $X \subset \R^n$, the \emph{tangent cone} $C_0 X$ of $X$ at $0$ is the set defined as the real cone (consisting of half-lines), with vertex at $0$, over the following set:
	$$\mathbb{S}_0X:=\lim_{\varepsilon\to 0^+}\left(\,\frac{1}{\varepsilon}\big(X \cap\{\|x\|=\varepsilon\}\big)\right).$$
	Here we consider the Hausdorff limit (the definability ensures its existence). 
\end{definition}
Thus, if $\mathbb{S}_0X$ is defined as above, $C_0 X=\{tv;v\in \mathbb{S}_0X$ and $t\geq 0\}$.
\begin{definition}\label{Def: Link at the origin}\normalfont
	Given $X \subset \R^n$, the \emph{link of $X$} (at the origin) is the topological equivalence class of the sets $X\cap \mathbb{S}^{n-1}_{\varepsilon}$ for small enough $\varepsilon>0$. We denote the link of $X$ as $Link(X)$. The \emph{tangent link of $X$} is $Link(C_0X)$.
\end{definition}

\begin{remark}\label{Rem: well-defined} \normalfont
	The notion of link used in this paper is different from the notion of link in Knot Theory.
	Thus, if $(X,0)\subset \R^4$ is a surface germ with an isolated singularity, then each connected component of $Link(X)$ is a knot in $\mathbb{S}^3$.
\end{remark}

\begin{definition}\label{Def:bi-Lipschitz map}\normalfont
	Given two metric spaces $(X_1, d_1)$ and $(X_2, d_2)$, we say that a bijection $\varphi: X_1 \to X_2$ is \emph{bi-Lipschitz for the metrics $d_1$ and $d_2$} (or bi-Lipschitz, for short) if there is a real number $C \ge 1$ such that
	\begin{equation*}
		\frac{1}{C} \cdot d_1(p, q) \leq d_2(\varphi(p), \varphi(q)) \leq C \cdot d_1(p, q), \quad \forall p, q \in X_1.
	\end{equation*}
	For each $C \ge 1$ that satisfies such condition, we say that $\varphi$ is $C$-bi-Lipschitz. Furthermore, we say that the metric spaces $(X_1, d_1)$ and $(X_2, d_2)$ are \emph{bi-Lipschitz equivalent} if there is a bi-Lipschitz map $\varphi: X_1 \to X_2$.
\end{definition}

\begin{definition}\label{Def:inner-outer-metric}\normalfont
	Given $X \subset \mathbb{R}^{n}$, we define the following two metrics in $X$:
	\begin{itemize}
		\item \emph{the outer metric of $X$}: we define the outer metric of $X$ as the Euclidean distance $d_{out}: X \times X \to \mathbb{R}_{+}$, given by $d_{out}(x, y) = \|x - y\|$, for all $x, y \in X$;
		\item \emph{the inner metric of $X$}: if $X$ is path-connected, we define the inner metric of $X$ as the distance $d_X=d_{inn}: X \times X \to \mathbb{R}_{+}$, given by $d_{inn}(x, y) = \inf{\{ \ell(\alpha) \}}$, for all $x, y \in X$. The infimum is taken over all rectifiable paths $\alpha \subset X$ from $x$ to $y$, and $\ell(\alpha)$ is the length of $\alpha$.
	\end{itemize}
\end{definition}

\begin{remark}\label{Rem: inner-outer-metric-germ}
	Note that the germ of a closed definable set $X\subset \mathbb{R}^{n}$ is always connected and then it is path-connected. Therefore, $d_{out}(x,y) \le d_{inn}(x,y)<\infty, \forall x,y\in X$.
\end{remark}

\begin{remark}\label{Rem: pancake metric}
	We do not know whether the inner metric is definable (it is claimed by the Hardt conjecture, still open), but one can consider an equivalent definable metric as in \cite{KurdykaOrro97} (see also the \textbf{pancake metric} defined in \cite{LBirbMosto2000NormalEmbedding}). 
\end{remark}

\begin{definition}\label{Def:outer, inner, ambient bi-Lipschitz map}\normalfont
	Given two sets $X_1 \subset \mathbb{R}^{m}$, $X_2 \subset \mathbb{R}^{n}$, we consider three kinds of bi-Lipschitz equivalence by asking for the existence of a homeomoprhism from $X_1$ to $X_2$ that is additionally an
	\begin{itemize}
		\item \emph{outer bi-Lipschitz map}, i.e. 
  a map that is bi-Lipschitz for the outer metrics of $X_1$ and $X_2$;
		\item \emph{inner bi-Lipschitz map} i.e. 
  (assuming $X_1$ and $X_2$ are path-connected) a map that is bi-Lipschitz for the inner metrics of $X_1$ and $X_2$.
		\item \emph{ambient bi-Lipschitz map} i.e. 
  a bi-Lipschitz map $\varphi : \mathbb{R}^{m} \to \mathbb{R}^{n}$ with respect to the outer metric, such that $\varphi (X_1) = X_2$. Notice that if such an ambient bi-Lipschitz map exists, then $m=n$.
	\end{itemize}
	We also say that $X_1$ and $X_2$ are \emph{outer bi-Lipschitz equivalent} (resp. inner, ambient) if there is an outer bi-Lipschitz (resp. inner, ambient) map between $X_1$ and $X_2$.
\end{definition}

\begin{remark}\label{Rem: amb-out-inn}\normalfont
	If $X_1$ and $X_2$ are ambient bi-Lipschitz equivalent, then $X_1$ and $X_2$ are outer bi-Lipschitz equivalent, and if $X_1$ and $X_2$ are outer bi-Lipschitz equivalent, then $X_1$ and $X_2$ are inner bi-Lipschitz equivalent. However, the converses are not generally true (for counterexamples, see \cite{withMisha}).
\end{remark}

\begin{definition}\label{Def:LNE}\normalfont
	Given a path-connected set $X \subset \mathbb{R}^{n}$, we say that $X$ is \emph{Lipschitz normally embedded} (or LNE, for short) if the outer metric and inner metric are bi-Lipschitz equivalent i.e. there is a constant $C \ge 1$ such that
	\begin{equation*}
		d_X(x,y) \leq C \cdot \|x-y\|; \quad \forall \; x,y \in X.
	\end{equation*}
	In this case, we say that $X$ is $C$-LNE. Given $p\in X$, we say that $X$ is \emph{Lipschitz normally embedded at $p$} (or LNE at $p$) if there exists a neighborhood $U$ of $p$ such that $X \cap U$ is LNE, or equivalently, the germ $(X,p)$ is LNE. If $X\cap U$ is $C$-LNE for some $C\ge 1$, we say that $(X,p)$ is $C$-LNE.
\end{definition}

\begin{theorem}[Mendes-Sampaio Criterion \cite{mendes-sampaio}]\label{Teo:mendes-sampaio}
	Let $X \subset \mathbb{R}^{n}$ be a closed subanalytic set, such that $0 \in X$ and the link of $X$ is connected. Then, $X$ is LNE at 0 if and only if there is a uniform constant $C\ge1$ such that $X_t$ is $C$-LNE, for all $t>0$ small enough.
\end{theorem}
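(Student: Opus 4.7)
I would treat the two implications separately, adopting the natural reading $X_t := X \cap \mathbb{S}_t^{n-1}$.

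\emph{Necessity} ($\Rightarrow$): Assume $X$ is $C$-LNE near $0$. Fix a small $t>0$ and points $p,q \in X_t$. By LNE of $X$, there is a rectifiable path $\gamma \subset X$ from $p$ to $q$ with $\ell(\gamma)\le C\|p-q\|$. I would project $\gamma$ onto $X_t$ via the radial retraction $\rho_t(x) = t\,x/\|x\|$, followed by a small nearest-point correction into $X_t$. When $\|p-q\|$ is small relative to $t$, the path $\gamma$ stays in a thin annulus around $\mathbb{S}_t^{n-1}$ on which $\rho_t$ is uniformly Lipschitz, and by the definable tangent-cone structure of $X$ near $0$, the image $\rho_t(\gamma)$ lies at Hausdorff distance $o(t)$ from $X_t$, so the correction to $X_t$ contributes negligible length. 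This yields $d_{X_t}(p,q) \le C'\|p-q\|$ with $C'$ independent of $t$. For $\|p-q\|$ comparable to $t$, the connectedness of the link provides a uniform diameter bound $\mathrm{diam}_{\mathrm{inner}}(X_t) \le C''\, t$, which closes the case.

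\emph{Sufficiency} ($\Leftarrow$): Suppose that the links $X_t$ are uniformly $C$-LNE. Given $x,y \in X$ close to $0$ with $s := \|x\| \le t := \|y\|$, I build a path in $X$ from $x$ to $y$ as the concatenation of:
\begin{itemize}
\item a quasi-radial arc $\sigma \subset X$ from $x$ to some $x' \in X_t$, of length $\le C_1(t-s)$;
\item an inner path in $X_t$ from $x'$ to $y$, of length $\le C\,\|x'-y\|$.
\end{itemize}
The arc $\sigma$ would be produced via the Curve Selection Lemma applied to the radial function $r(x)=\|x\|$ restricted to $X$, together with Łojasiewicz-type estimates, which deliver the length bound in terms of $t-s$. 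Since $t-s \le \|x-y\|$ and $\|x'-y\| \le \|x'-x\| + \|x-y\| \le (C_1+1)\|x-y\|$, one obtains $d_X(x,y) \le C_2\,\|x-y\|$ with $C_2$ depending only on $C$ and $C_1$, i.e.\ $X$ is LNE near $0$.

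\emph{Main obstacle.} The harder direction is sufficiency, and within it the construction of $\sigma$ with length controlled uniformly by $t-s$. This amounts to showing that $r|_X$ admits a Lipschitz fibration near $0$, for which both the connected-link hypothesis and the polynomial boundedness of the ambient o-minimal structure are essential: without them, flat or spiraling radial behavior of $X$ could destroy the estimate. The uniformity of all constants as $t\to 0^+$ is the other recurring technical point, which in each direction is secured by the definability of $X$ (via the tangent-cone / conical structure theorem on one side, and definable trivializations of $r|_X$ on the other).
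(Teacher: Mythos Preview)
The paper does not prove this statement: Theorem~\ref{Teo:mendes-sampaio} is quoted from \cite{mendes-sampaio} (with extensions referenced to \cite{Nhan:2021} and \cite{Sampaio:2023}) and is used as a black box, so there is no proof in the paper to compare your proposal against.

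That said, your sketch has a genuine gap in the necessity direction. The map $\rho_t(x)=t\,x/\|x\|$ lands in $\mathbb{S}_t^{n-1}$, not in $X_t$, and the proposed ``nearest-point correction into $X_t$'' is doing all the work. For a point $z\in X$ with $\|z\|$ close to $t$ there is no a~priori reason that $z$ (or $\rho_t(z)$) is close to $X_t$: closeness of $\|z\|$ to $t$ says nothing about proximity of $z$ to the level set $X\cap\mathbb{S}_t^{n-1}$ unless you already have a Lipschitz radial trivialization of $X$, which is essentially the content of the theorem. Moreover, even when nearest-point projection onto $X_t$ is single-valued, it need not be Lipschitz on a neighborhood, so pushing a curve onto $X_t$ this way can blow up length. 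The claim that $\rho_t(\gamma)$ lies at Hausdorff distance $o(t)$ from $X_t$ ``by the definable tangent-cone structure'' is not justified and is in general false for paths that wander across $X$.

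Your sufficiency outline is closer to what actually works, and you correctly flag the quasi-radial arc $\sigma$ with $\ell(\sigma)\le C_1(t-s)$ as the crux; but note that Curve Selection plus \L ojasiewicz alone do not give a \emph{uniform} constant $C_1$ over all basepoints $x$---one needs a definable family argument (or the Lipschitz conical structure theorem) to obtain uniformity, and the connected-link hypothesis enters precisely to guarantee that the radial flow on $X$ does not split into components with diverging constants.
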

The theorem above still holds true when $X$ is a definable set in a polynomially bounded o-minimal structure on $\R$ 
(see \cite{Nhan:2021} or \cite{Sampaio:2023}).

\begin{definition}\label{Def:arc}\normalfont
	An \emph{arc in $\mathbb{R}^{n}$} with initial point $p$ is a germ at the origin of a continuous, non-constant definable map $\gamma : [0, t_0) \to \mathbb{R}^{n}$, for some $t_0 >0$, such that $\gamma (0) = p$. Every arc with an initial point at the origin will be simply called an arc. Given a germ at the origin of a set $X$, the set of all arcs $\gamma \subset X$ is called the \emph{Valette link of $X$} and is denoted by $V(X)$ (see \cite{Valette-Link}).
\end{definition}

\begin{remark}\normalfont
	Usually, we identify an arc $\gamma$ at $p\in X$ with its pointwise image $\gamma(t) \in \mathbb{R}^{n}$ obtained by intersecting $\gamma$ with a sphere centred at $p$ with radius $t$, for $t$ sufficiently small. This intersection is unique, by the the Local Conical Structure Theorem. This means also that \emph{we can parametrize all the arcs by the Euclidean distance to the point} $p$: $||\gamma(t)-p||=t$.
\end{remark}

\begin{definition}\label{Def:tord}\normalfont
	Given a set $X$ and two arcs $\gamma_1, \gamma_2 \in V(X)$, we define the \emph{order of tangency of $\gamma_1, \gamma_2$ in the outer metric}, denoted as $tord(\gamma_1, \gamma_2)$, as the exponent $\beta \in \mathbb{F}$ such that there exists a constant $c>0$ satisfying
	\begin{equation*}
		\| \gamma_1 (t) - \gamma_2 (t) \| = ct^{\beta}+o(t^\beta)
	\end{equation*}
	Notice that, if $\gamma_1 \neq \gamma_2$ (as germs at the origin), by the Newton-Puiseux theorem, such constants $c$ and $\beta=\mathrm{ord}_0||\gamma_1-\gamma_2||$ exist. We also define $tord(\gamma,\gamma) :=\infty$ for every arc $\gamma \in V(X)$. We define the \emph{order of tangency of $\gamma_1, \gamma_2$ in the inner metric} similarly and denote it as $tord_{inn}(\gamma_1, \gamma_2):=\mathrm{ord}_0d_{inn}(\gamma_1,\gamma_2)$ (the definition is well-posed, see Remark \ref{Rem: pancake metric}).
\end{definition}

\begin{remark}\label{Rem:tord}\normalfont
	Both orders of tangency are elements of $\F$ satisfying $1 \le tord_{inn}(\gamma_1,\gamma_2) \le tord(\gamma_1, \gamma_2)$ for all $\gamma_1, \gamma_2 \in V(X)$. Moreover, $X$ is LNE at 0 if, and only if, $tord_{inn}(\gamma_1,\gamma_2) \ge tord(\gamma_1, \gamma_2)$, for all $\gamma_1, \gamma_2 \in V(X)$ (see \cite{withRodrigo}).
\end{remark}

\begin{definition}\label{Def:Holder-triangle}\normalfont
	Given $\alpha \in\mathbb{F}_{\ge 1}$, we define the \emph{$\alpha$-standard H\"older triangle} as the set:
	$$T_{\alpha} = \{ (x,y) \in \mathbb{R}^{2} \mid 0\le x \le 1 ; 0\le y \le x^{\alpha} \}$$
The curves $l_0 := \{ (x,0) \in \mathbb{R}^{2} : 0\le x \le 1\}$ and $l_1 := \{ (x,x^{\alpha}) \in \mathbb{R}^{2} : 0\le x \le 1 \}$ are called the \emph{boundary arcs of $T_{\alpha}$}. We say that a set $X\subset \mathbb{R}^{n}$ is an \emph{$\alpha$-H\"older triangle with the main vertex at $a \in X$} if there exists an \emph{inner} bi-Lipschitz map $\varphi : (T_\alpha,0) \to (X,a)$. Here, $T_\alpha, X$ are seen as metric spaces with the inner metric. The sets $\gamma_0=\varphi (l_0)$, $\gamma_1=\varphi (l_1)$ are defined to be the \emph{boundary arcs of $X$} and we also denote the H\"older triangle $X$ as $T(\gamma_0,\gamma_1)$. Note that $\alpha$ is uniquely determined, i.e. if $X$ is also inner bi-Lipschitz to an $\alpha'$-standard H\"older triangle, then $\alpha'=\alpha$.
\end{definition}

Note that a H\"older triangle need not be LNE whereas the standard one is so. A similar observation is valid for the second type of basic `building block' which is the horn defined hereafter.

\begin{definition}\label{Def:horn}\normalfont
	Given $\beta \in \F_{\ge 1}$, the \emph{$\beta$-standard horn} is the set
	$$H_{\beta} = \{ (x,y,t) \in \mathbb{R}^{3} \mid 0\le t \le 1 \; ; \; x^2 + y^2 = t^{2\beta} \}.$$
	
	We say that a set $X\subset \mathbb{R}^{n}$ is a \emph{$\beta$-horn with the main vertex at $a \in X$} if there exists an \emph{inner} bi-Lipschitz map $\varphi : (H_\beta,0) \to (X,a)$. Here, $H_\beta$ and $X$ are seen as metric spaces with the inner metric. Again, $\beta$ is uniquely determined for $X$.
\end{definition}
\begin{remark}\label{Rem:horn}\normalfont
A useful observation is that whenever we choose a $\beta$-H\"older triangle $T\subset X$ inside a $\beta$-horn $X$ (with the same vertex), we get another $\beta$-H\"older triangle $\overline{X\setminus T}$ (see \cite{birbrair99}).
\end{remark}

\begin{definition}\label{Def:amb-trivial}\normalfont
	An LNE $\alpha$-H\"older triangle $T\subset \R^n$ is called \emph{Ambient Lipschitz trivial}  if it is ambient bi-Lipschitz equivalent to a standard $\alpha$-H\"older triangle in a two dimensional subspace of $\R^n$.
	
	An LNE $\beta$-horn $H\subset \R^n$ is called \emph{Ambient Lipschitz trivial} if it is ambient bi-Lipschitz equivalent to a standard $\beta$-horn triangle in a three dimensional subspace of $\R^n$.
\end{definition}

\section{Microknots and Hornifications}

\begin{definition}\label{Def:microknot}\normalfont
Let $\beta \in \F_{\ge 1}$, $K$ be a knot and $X\subset \R^4$ be an LNE surface germ. We say that $X$ contains an \emph{$\beta$-microknot}, corresponding to the knot $K$, if the following conditions holds:
\begin{enumerate}
	\item there are arcs $\gamma_1, \gamma_2 \in V(X)$ and a $\beta$-H\"older triangle $T \subset X$, with $\gamma_1$, $\gamma_2$ as its boundary arcs;
	\item there is an ambient trivial 1-H\"older triangle ${T'} \subset \mathbb{R}^4$, with $\gamma_1$, $\gamma_2$ as its boundary arcs, and $Link(T\cup T')$ is isotopic to $K$.
\end{enumerate}
\end{definition}

\begin{figure}
    \centering
     \includegraphics[width=1\textwidth]{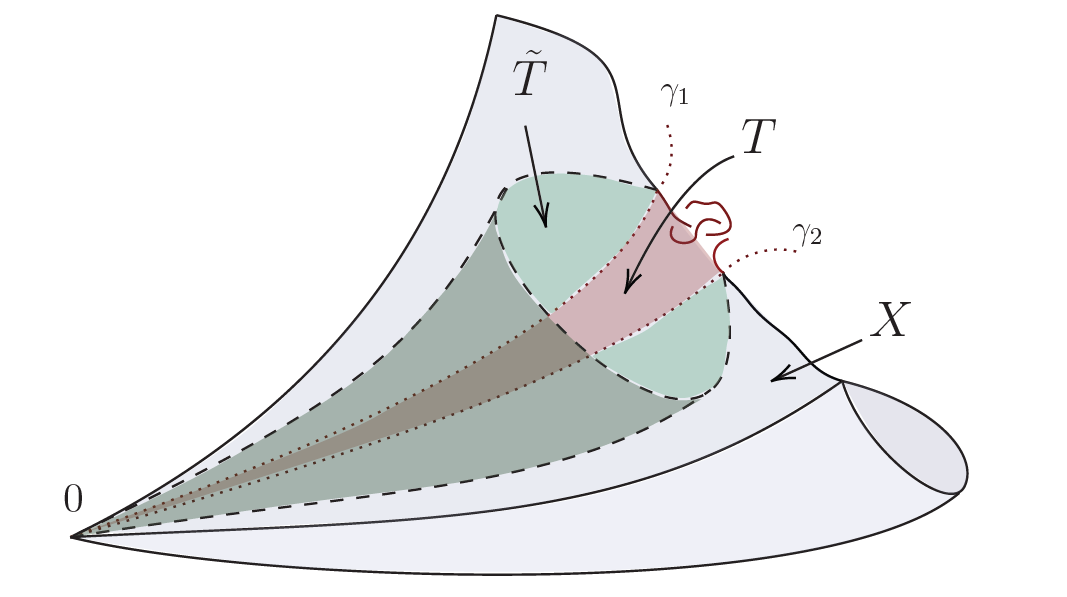}
    \caption{Figure 1: Surface $\displaystyle X$ containing a $\displaystyle \beta -$microknot in $\displaystyle T$.}
\end{figure}

Before the proof we shall introduce the construction of the so-called \emph{hornification}.

\begin{definition}\label{Def:horn-neighboorhood}\normalfont
	Let $X \subset \R^n$ be a germ at zero, $\eta \in (0,+\infty)$ and $\beta \in \F_{\ge 1}$. The \emph{$\beta$-horn neighborhood of amplitude $\eta$ of $X$} is defined as the set 
	\begin{equation*}
		\mathcal{H}_{\beta,\eta}(X):=\bigcup_{x\in X}\{z\in \R^n;\|z-x\|<\eta \|x\|^\beta\}.
	\end{equation*}
	If $\gamma\in V(\R^n)$ is an arc, we define the \emph{quasi-polar coordinates} of a point $x\in\mathcal{H}_{\beta,\eta}(\gamma)\setminus\gamma$ as $x=(t, \rho, v)$, where  $$\begin{cases} t=t(x)=\|x\|,\\ \rho=\rho(x)=\|x-\gamma(||x||)\|,\\ v=v(x) \in \mathbb{S}^{n-1}\end{cases}$$ with $v$ the tangent vector at $\gamma(||x||)$ of the geodesic in $\mathbb{S}_{||x||}^{n-1}$ connecting $x$ and $\gamma(||x||)$.
\end{definition}

\begin{definition}\label{Def: hornification}\normalfont
	Let $\eta \in (0,+\infty)$, $\beta \in \F_{\ge 1}$ and $K\subset\mathbb{S}^3$ be a knot. By the Akbulut-King Theorem (see \cite{Akbulut-King}), there is a smooth algebraic realisation of $K$. Let $\delta_K: [0,2\pi]\to \mathbb{S}^3$ be such a map ($\delta_K(0)=\delta_K(2\pi)$) and $\tilde K \subset \mathbb{S}^3$ be the image of $\delta_K$. Let also $\ell \subset \R^4$ be a straight line through the origin chosen so that $\ell \cap \tilde K=\emptyset$ and $\tilde K \subset \mathcal{H}_{\beta,\eta}(\ell) \cap \mathbb{S}^3$. Consider the following $\R^*_+$-action on $H_{\beta,\eta}(\ell)\setminus\ell$:
	\begin{equation*}
		\Theta_{\beta}(t,x)=(t,t^{\beta}\rho(x),v(x)),\quad \textrm{for}\> t>0, x\in H_{\beta,\eta}(\ell)\setminus\ell.
	\end{equation*} 
	The \emph{$\beta$-hornification of $\tilde K$ along $\gamma$} is the image germ $${X}_{\beta, K}:=\Theta_{\beta}(\R^*_+\times\tilde K)\cup\{0\}.$$
\end{definition}

\begin{remark}\label{Rem:action} \normalfont
	For each $\eta>0$ small enough, we can suppose, by performing a homothety in $\mathbb{S}^3$, if necessary, that the $\beta$-hornification of $\tilde K$ stays in $\mathcal{H}_{b,\eta}(\ell)$. Moreover, the orbits $\gamma_{\theta}(t)=\Theta_{\beta}(t,\delta_K(\theta))$ of the action make a foliation on ${X}_{\beta, K}$. Notice also that, for each $\theta \in [0,2\pi]$, the two-dimensional plane $P_\theta$ containing  $\ell$ and $\gamma_{\theta}(t)$ does not depend on $t$ and contains the arc $\gamma_{\theta}$ (extended to the origin by setting $\gamma_{\theta}(0)=0$).
\end{remark}

\begin{definition}\label{Def: generic}\normalfont
	Given $\theta \in [0,2\pi]$, we say that $\gamma_{\theta}$ is \emph{simple} if the geodesic $r_{\theta}:[0,2\pi] \to \mathbb{S}^3$, connecting $\ell(1)=r_\theta(0)=r_\theta(2\pi)$ and $\gamma_{\theta}(1)=\delta_K(\theta)$ satisfies the following conditions:
	\begin{enumerate}
	\item $r_{\theta}$ is transversal to $\tilde K$;
	\item if $0=\tau_0<\tau_1<\cdots<\tau_n=2\pi$ are real numbers such that $r_{\theta}\cap \tilde K=\{r_{\theta}(\tau_1),r_{\theta}(\tau_2),\cdots,r_{\theta}(\tau_{n-1})\}$ and $r_{\theta}(\tau_k)=\gamma_{\theta}(1)$, then $r_{\theta}((\tau_k,\tau_{k+1}))\cap \partial_{\mathbb{S}^3}(\mathcal{H}_{\beta,\eta}(\ell) \cap \mathbb{S}^3)=\{r_{\theta}(u_{\theta}),r_{\theta}(v_{\theta})\}$, where $0<u_{\theta}<v_{\theta}<2\pi$.
	\end{enumerate}	
\end{definition}

\begin{remark}\label{Rem:simple}\normalfont
	Since $\ell(1) \notin \tilde K$, the set of $\theta \in [0,2\pi]$ such that $\gamma_{\theta}$ is simple has a nonempty interior. Indeed, take $\theta \in [0,2\pi]$ such that the distance, in $\mathbb{S}^3$, between $\ell(1)$ and $\gamma_{\theta}(1)$ is maximal. Obviously, a small shift in $\theta$ will ensure that $r_\theta$ is transversal to $\tilde K$ (this is a compact set) and this will remain true for all $\theta'$ close enough to $\theta$, transversality being an open condition.  
\end{remark}

Now we show some properties of the hornifications. In what follows, consider the notations as in Definitions \ref{Def: hornification}, \ref{Def: generic} and Remark \ref{Rem:action}.

\begin{proposition}\label{Prop:hornification-LNE} 
	The $\beta$-hornification $X_{\beta, K}$ of $\tilde K$ is LNE at 0.
\end{proposition}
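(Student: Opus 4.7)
The natural strategy is to invoke the Mendes--Sampaio criterion (Theorem~\ref{Teo:mendes-sampaio}): since $X_{\beta,K}$ is a definable germ with connected link, it suffices to exhibit a constant $C \ge 1$ such that the spherical slice $X_t := X_{\beta,K} \cap \mathbb{S}^{3}_t$ is $C$-LNE for every sufficiently small $t > 0$. The whole construction is tailor-made for this: each $X_t$ will be essentially a tiny, rescaled copy of $\tilde K$, and $\tilde K$, being a smooth compact algebraic curve in $\mathbb{S}^3$, is automatically LNE with some uniform constant $C_K \ge 1$.

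By Remark~\ref{Rem:action}, the orbits of the $\R^*_+$-action foliate $X_{\beta,K}$, so $X_t$ is the image of $\tilde K$ under the map
$$\phi_t : \tilde K \longrightarrow X_t, \qquad \phi_t(x) = \bigl(t,\, t^\beta \rho(x),\, v(x)\bigr),$$
written in quasi-polar coordinates along $\ell$. The plan is to establish two uniform-in-$t$ scaling laws for $\phi_t$:
\begin{itemize}
    \item[(i)] \textbf{outer:} $\|\phi_t(x) - \phi_t(y)\| \asymp t^\beta \|x - y\|$ for $x,y \in \tilde K$;
    \item[(ii)] \textbf{inner:} $d_{X_t}\bigl(\phi_t(x), \phi_t(y)\bigr) \asymp t^\beta \cdot d_{\tilde K}(x, y)$.
\end{itemize}
Working in the tangent plane to $\mathbb{S}^3_t$ at $\ell(t)$, one writes $\phi_t(x) = \ell(t) + t^\beta \rho(x) v(x) + O(t^{2\beta-1})$, the error coming from spherical curvature and being of lower order than $t^\beta$ since $\beta > 1$. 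Both (i) and (ii) then reduce to the observation that $x \mapsto \rho(x) v(x) \approx x - \ell(1)$ is uniformly bi-Lipschitz on $\tilde K \subset \mathcal{H}_{\beta, \eta}(\ell) \cap \mathbb{S}^3$ for $\eta$ small.

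Once (i) and (ii) are in hand, chaining them with the LNE property of $\tilde K$ gives
$$d_{X_t}\bigl(\phi_t(x), \phi_t(y)\bigr) \le C_1 t^\beta d_{\tilde K}(x, y) \le C_1 C_K t^\beta \|x-y\| \le C \|\phi_t(x) - \phi_t(y)\|,$$
with $C$ independent of $t$, and an application of Theorem~\ref{Teo:mendes-sampaio} will close the argument. The main obstacle will be the uniform-in-$t$ control of the quasi-polar coordinate chart: one must verify that the Jacobian of $(t, \rho, v) \mapsto z$ has distortion bounded uniformly on $\mathcal{H}_{\beta,\eta}(\ell)$ so that the constants in (i) and (ii) do not blow up as $t \to 0^+$. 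This is ultimately a matter of elementary spherical trigonometry and uses no feature of the knot $\tilde K$ beyond its smoothness and its containment in the horn of amplitude $\eta$.
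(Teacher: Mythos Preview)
Your approach is essentially the same as the paper's: apply the Mendes--Sampaio criterion (Theorem~\ref{Teo:mendes-sampaio}) after showing that each spherical slice $X_t$ is a rescaled copy of the smooth (hence LNE) curve $\tilde K$, with LNE constant uniform in $t$. The paper dispatches this in two lines by asserting that the $\R^*_+$-action is literally by homotheties of ratio $t^\beta$ centred at $\ell(t)$, so that the ratio $d_{inn}/d_{out}$ on $X_t$ is \emph{independent} of $t$; you take the more cautious route of working in quasi-polar coordinates, isolating the spherical-curvature error $O(t^{2\beta-1})$, and concluding only that the LNE constant is uniformly \emph{bounded}. Your extra care is not wasted: strictly speaking the map $\phi_t$ is a homothety only up to that curvature correction, so the paper's ``does not depend on $t$'' is a mild overstatement that your version repairs. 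One small point: your error analysis invokes $\beta>1$, while the proposition is stated for $\beta\in\F_{\ge 1}$; the boundary case $\beta=1$ is actually cleaner (then $\Theta_1(t,x)=tx$ is an exact scaling and the slices are true homothetic copies), so you should just mention it separately.
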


\begin{proof}
	Since the group $\R_{+}^{*}$ acts on $\tilde K$ by homotheties with ratio $t^{\beta}$ and centre $\ell(t)$, and $\tilde X_K$ is smooth, the supremum of the quotient $d_{inn}(x_1,x_2)/\|x_1-x_2\|$, restricted to $X_{\beta, K} \cap \mathbb{S}^3_{t}$, is a positive number and does not depend on $t$ (note that $\tilde K$ is LNE it being smooth). By Theorem \ref{Teo:mendes-sampaio}, the set $X_{\beta, K}$ is LNE.
\end{proof}

\begin{remark}\label{rem: HT}\normalfont
    In connection with Remark \ref{Rem:horn} it should be observed that the $\beta$-hornification $X_{\beta,K}$ is a $\beta$-horn. It follows from the construction that given any two distinct $\gamma_1,\gamma_2\in V(X_{\beta, K})$, we get an $\alpha$-H\"older triangle $T(\gamma_1,\gamma_2)=\Theta_\beta((0,1]\times K_{\gamma_1,\gamma_2})\cup\{0\}$ where $K_{\gamma_1,\gamma_2}$ is the part of $\tilde K$ lying between the two points $\gamma_i(1)=\delta_K(\theta_i)$, $i=1,2$, i.e. $K_{\gamma_1,\gamma_2}=\delta_K([\theta_1,\theta_2])\subset \tilde{K}$ (assuming $\theta_1<\theta_2$). Moreover, $\alpha=tord(\gamma_1,\gamma_2)\geq \beta$. In particular, if we choose $\gamma_i=\gamma_{\theta_i}$ for some $\theta_1<\theta_2$ in $[0,2\pi)$, then we will obtain a $\beta$-H\"older triangle, since both curves lie in the respective planes $P_{\theta_i}$ with $P_{\theta_1}\cap P_{\theta_2}=\ell$ and $tord_0(\gamma_{\theta_i},\ell)=\beta$ by construction. 
\end{remark}

\section{Universality Theorem for LNE H\"older triangles}
\begin{theorem}\label{main} For any knot $K$ and $\beta \in \mathbb{F}_{> 1}$, there exists an LNE H\"older triangle $T_{\beta, K} \subset \R^4$, containing a $\beta$-microknot, isotopic to $K$. Moreover, two triangles $T_{\beta, K_1}$ and $T_{\beta, K_2}$ of this kind are ambient bi-Lipschitz equivalent only if $K_1$ and $K_2$ are isotopic.
\end{theorem}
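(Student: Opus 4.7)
The plan is to build $T_{\beta,K}$ from the $\beta$-hornification $X_{\beta,K}$ of $\tilde K$ (Definition~\ref{Def: hornification}); this is LNE by Proposition~\ref{Prop:hornification-LNE} and a $\beta$-horn by Remark~\ref{rem: HT}. I would pick two simple arcs $\gamma_{\theta_1},\gamma_{\theta_2}$ (Remark~\ref{Rem:simple}) whose parameter values are so close that the short sub-arc $\delta_K([\theta_1,\theta_2])\subset\tilde K$ is unknotted in $\mathbb{S}^3$ (i.e.\ lies inside a small geodesic disk disjoint from the rest of $\tilde K$). Cutting $X_{\beta,K}$ along the corresponding short $\beta$-H\"older sub-triangle $T_0=T(\gamma_{\theta_1},\gamma_{\theta_2})$ provided by Remark~\ref{rem: HT}, set
\[
T_{\beta,K}:=\overline{X_{\beta,K}\setminus T_0}.
\]
Remark~\ref{Rem:horn} ensures $T_{\beta,K}$ is again a $\beta$-H\"older triangle, and the Mendes--Sampaio criterion (Theorem~\ref{Teo:mendes-sampaio}) applies: all cross-sections $T_{\beta,K}\cap\mathbb{S}^3_t$ are related by the $\Theta_\beta$-action to a fixed smooth model, hence are uniformly LNE.

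To exhibit a $\beta$-microknot isotopic to $K$ inside $T_{\beta,K}$, produce a pair $(T,T')$ as in Definition~\ref{Def:microknot}: $T$ is a $\beta$-H\"older sub-triangle of $T_{\beta,K}$ and $T'$ is an ambient trivial $1$-H\"older cap in $\R^4$ sharing its boundary arcs. The link in $\mathbb{S}^3_\varepsilon$ then decomposes as the concatenation of the long arc of the scaled $\tilde K$ (the link of $T$) with the arc carried by $T'$; the unknottedness of $\delta_K([\theta_1,\theta_2])$ guarantees that this closed curve is isotopic to the scaled $\tilde K$, and hence to $K$.

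For the rigidity, let $\Phi:(\R^4,0)\to(\R^4,0)$ be ambient bi-Lipschitz with $\Phi(T_{\beta,K_1})=T_{\beta,K_2}$, and transport the microknot data $(T^{(1)},{T'}^{(1)})$ realising $K_1$ in $T_{\beta,K_1}$: the image $\Phi(T^{(1)})\subset T_{\beta,K_2}$ is again a $\beta$-H\"older triangle and $\Phi({T'}^{(1)})$ is again an ambient trivial $1$-H\"older triangle (ambient triviality being preserved by the ambient bi-Lipschitz $\Phi$) sharing its boundary arcs with $\Phi(T^{(1)})$. Since $\Phi$ is a homeomorphism near $0$, it sends $Link(T^{(1)}\cup{T'}^{(1)})$ to an isotopic closed curve in $\mathbb{S}^3$, so $T_{\beta,K_2}$ contains a $\beta$-microknot isotopic to $K_1$. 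Combined with the microknot isotopic to $K_2$ produced by the direct construction, this forces $K_1\simeq K_2$ provided the isotopy class of the $\beta$-microknot is an intrinsic invariant of the germ $T_{\beta,K_2}$.

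The main obstacle is precisely this well-definedness of the microknot: two ambient trivial $1$-caps with the same boundary arcs $\gamma_1,\gamma_2$ could a priori yield non-isotopic closed links in $\mathbb{S}^3_\varepsilon$. My plan to resolve it is to use the ambient triviality of each cap to realise it, via an ambient bi-Lipschitz self-map of $\R^4$, as the image of a standard planar $1$-H\"older triangle; composing these two realisations should produce an ambient isotopy of $\R^4$ fixing $\gamma_1\cup\gamma_2$ that carries one cap to the other, inducing an isotopy between the corresponding closed curves in $\mathbb{S}^3_\varepsilon$ and hence giving the required invariance.
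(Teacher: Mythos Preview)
Your construction $T_{\beta,K}=\overline{X_{\beta,K}\setminus T_0}$ omits a step that the paper needs precisely in order to satisfy Definition~\ref{Def:microknot}. Every arc of your $T_{\beta,K}$ lies inside $\mathcal{H}_{\beta,\eta}(\ell)$, so any pair $\gamma_1,\gamma_2\in V(T_{\beta,K})$ has $tord(\gamma_1,\gamma_2)\ge\beta>1$; since an ambient trivial $T'$ is LNE, its exponent equals the outer tord of its boundary arcs, and therefore no cap with boundary in your $T_{\beta,K}$ can be a $1$-H\"older triangle as condition~(2) demands. The paper remedies this by gluing two planar $1$-H\"older ``wings'' $T_{\theta_i}\subset P_{\theta_i}$ along the simple arcs $\gamma_{\theta_i}$, so that the boundary of the final $T_{\beta,K}=(X_{\beta,K}\setminus T)\cup T_{\theta_1}\cup T_{\theta_2}$ consists of straight half-lines $\ell_1,\ell_2$ with $tord(\ell_1,\ell_2)=1$, and takes $T'$ to be the cone over a geodesic segment $\tau\subset\mathbb{S}^3\setminus\mathcal{H}_{\beta,\eta}(\ell)$.

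Your proposed resolution of the well-definedness obstacle is also flawed. Composing the two ambient trivializations of $T'_1$ and $T'_2$ yields an ambient bi-Lipschitz self-map of $\R^4$ carrying $T'_1$ to $T'_2$, but there is no reason it fixes $\gamma_1\cup\gamma_2$ (the two trivializations need not agree there), and even if it did it would still move $T_{\beta,K}$; so you obtain no isotopy between $Link(T_{\beta,K}\cup T'_1)$ and $Link(T_{\beta,K}\cup T'_2)$. The paper does not attempt an abstract invariance argument: it simply notes that $\varphi$ carries $T_{\beta,K_1}\cup T'_{K_1}$ (link $\simeq K_1$) onto $T_{\beta,K_2}\cup\varphi(T'_{K_1})$, and then identifies the latter ambient-topologically with $X_{\beta,K_2}$ (link $\simeq K_2$), using that $\varphi(T'_{K_1})$ is again an ambient trivial cap on the boundary arcs $\ell_1^{(2)},\ell_2^{(2)}$ together with the simplicity of the $\gamma_{\theta_i}$. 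This is exactly where the wings matter: they keep the cap separated from the horn neighbourhood where all the knotting occurs.
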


\begin{proof}
 By Remark \ref{Rem:simple}, there are $0<\theta_1<\theta_2<2\pi$ such that $\gamma_{\theta}$ is simple, for every $\theta \in [\theta_1, \theta_2]$. Fix such $\theta$ and let $Q_\theta$ be the plane spanned by the tangent vector to $\tilde K$ at $\delta_K(\theta)=\gamma_\theta(1)$ and the vector spanning $\ell$. Consider the orthogonal projection $\phi: X_{\beta, K} \to Q_{\theta}$. Let $T=T(\gamma_{\theta_1},\gamma_{\theta_2})\subset X_{\beta, K}$ be a H\"older triangle, such that  $\gamma_{\theta} \in V(T)$ (cf. Remark \ref{rem: HT}). By choosing $\theta_2$ close enough to $\theta_1$, we can suppose that $\phi|_{T}$ is a bi-Lipschitz map.

\begin{afirm}\label{small-triangle} $T$ is an ambient Lipschitz trivial H\"older triangle.
\end{afirm}

\begin{proof}
Let $\tilde T \subset Q_{\theta}$ be the image of $T$  under the map $\phi$. Notice that $T$ is a graph of a bi-Lipschitz map $\tilde \phi:\tilde T \to \R^2$. By the Kirszbraun Theorem, there is a definable Lipschitz extension $\Phi$ of $\tilde \phi$, from $\tilde T$ to the two-dimensional plane $Q_{\theta}$. By rotating axes, if necessary, let us define the coordinates $(u_1,u_2,v_1,v_2)$ on $\R^4$, such that $(u_1,u_2)$ are the coordinates in $Q_{\theta}$ and $(v_1,v_2)$ are the coordinates in its orthogonal complement. Now define $\Psi : \R^4 \to \R^4$ as: 
$$ \Psi(u_1,u_2,v_1,v_2)= (u_1,u_2, (v_1,v_2)-\Phi(u_1,u_2)) \, ; \, \forall \, (u_1,u_2,v_1,v_2) \in \mathbb{R}^4.$$ Since $\Phi$ is Lipschitz and  definable, and $\Psi$ is a bijection with inverse $$\Psi^{-1}(u_1,u_2,v_1,v_2)=(u_1,u_2,(v_1,v_2)+\Phi(u_1,u_2)),$$ it follows that $\Psi$ is a definable  bi-Lipschitz map. Notice also that $\Psi(T)=\tilde T$.  This proves the affirmation.
\end{proof}

Let $x_1, x_2 \in [0,2\pi]$ such that $r_{\theta_1}(x_1),r_{\theta_2}(x_2) \in \mathbb{S}^3 \setminus \mathcal{H}_{\beta, \eta}(\ell)$ and the geodesic segment $\tau \subset \mathbb{S}^3$ connecting $r_{\theta_1}(x_1)$ and $r_{\theta_2}(x_2)$ is transversal to $r_{\theta_1}$ and $r_{\theta_2}$. For $i=1,2$, let $\ell_i$ be the straight line from $0$ to $r_{\theta_i}(x_i)$ and let $T_{\theta_i}=T(\gamma_{\theta_i},\ell_i) \subset P_{\theta_i}$ (cf. Remark \ref{Rem:action}). Notice that $T_{\theta_1}$ and $T_{\theta_2}$ are 1-H\"older triangles. Finally, define:
 $$T_{\beta, K}:= (X_{\beta, K} \backslash T)\cup (T_{\theta_1}\cup T_{\theta_2})
  \> \textrm{and} \> T':=\mathrm{Tr}_0(\tau)$$
 with $\mathrm{Tr}_0(\tau)=\{tv\mid v\in \tau, 0\leq t\leq 1\}$.

\begin{figure}
    \centering
     \includegraphics[width=1\textwidth]{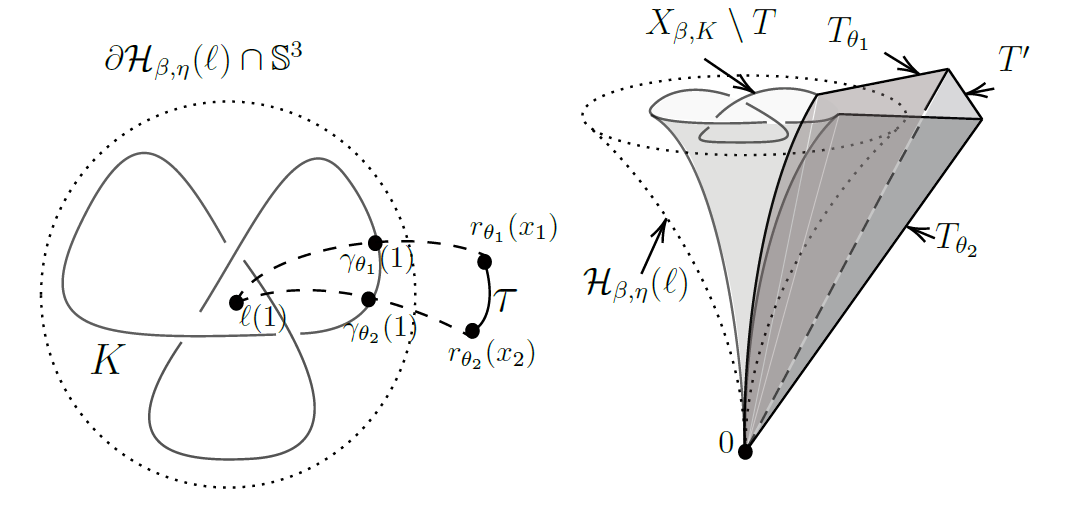}
    \caption{Figure 2: Construction of the sets $\displaystyle T_{\beta, K}$ and $\displaystyle {T'}$.}
\end{figure}

We will prove first that $T_{\beta, K}$ is LNE. The set $X_{\beta, K} \backslash T$ is LNE because $X_{\beta, K}$ is LNE by Proposition \ref{Prop:hornification-LNE} and since $\theta_1<\theta_2$, $T=T(\gamma_{\theta_1},\gamma_{\theta_2})$ is an LNE $\beta$-H\"older triangle due to the action $\Theta_{\beta}$ (Remark \ref{rem: HT}). By Remark \ref{Rem:horn} and  $\overline{X_{\beta, K} \backslash T}$ is a $\beta$-H\"older triangle as well and it is LNE automatically. 

Notice also that $T_{\theta_1}$ and $T_{\theta_2}$ are LNE simply because they are clearly plane 1-H\"older triangles. Suppose now that $\gamma_1\in V(X_{\beta, K} \backslash T)$ and $\gamma_2\in V(T_{\theta_1})$. Since the angle between the tangent spaces along $\gamma_{\theta_1}$ to the set $T_{\beta, K}$ and the plane $P_{\theta_1}$ containing $T_{\theta_1}$ is bounded away from zero ($\gamma_{\theta_1}$ being simple), we obtain, by the LNE conditions satisfied by both $X_{\beta, K} \backslash T$ and $T_{\theta_1}$, that 
$$tord(\gamma_1,\gamma_2)=\min\{tord_{inn}(\gamma_1,\gamma_{\theta_1}), tord_{inn}(\gamma_2,\gamma_{\theta_1})\}=tord_{inn}( \gamma_1,\gamma_2).$$ 
Therefore, $(X_{\beta, K} \backslash T)\cup T_{\theta_1}$ is LNE, and analogously $(X_{\beta, K} \backslash T)\cup T_{\theta_2}$ is LNE. Thus $T_{\beta, K}$ is LNE. In a similar way, $T_{\beta,K}  \cup T'$ is LNE, too.

Let us show that $T_{\beta, K}$ has a microknot. Notice that $T'=\mathrm{Tr}_0\tau$ is ambient Lipschitz trivial, since it is a plane standard 1-H\"older triangle. Since $T_{\beta, K}$ is a $\beta$-H\"older triangle, $T_{\beta, K}, T'$ have $\ell_1, \ell_2$ as boundary arcs and $Link(T_{\beta, K} \cup T')$ is isotopic to $K$ by construction (the arcs $\gamma_{\theta_1},\gamma_{\theta_2}$ beign simple), the result follows.

To end the proof, suppose that $T_{\beta,K_1}$ and $T_{\beta,K_2}$ are ambient bi-Lipschitz equivalent. Let $\varphi:\R^4 \to \R^4$ be an ambient bi-Lipschitz map, preserving the orientation of the knots and such that $\varphi(T_{\beta,K_1})=T_{\beta,K_2}$. Then the unions $T_{\beta,K_1} \cup T'_{K_1}$ and $T_{\beta,K_2} \cup \varphi(\tilde T'_{K_1})$ are ambient topologically equivalent (here $T'_{K_1}$ is the triangle $T'$ constructed for $T_{\beta,K_1}$). That is why $X_{\beta,K_1} \simeq T_{\beta,K_1} \cup T'_{K_1}$ and $X_{\beta,K_2} \simeq T_{\beta,K_2} \cup  \varphi(T'_{K_1})$ must also be ambient topologically equivalent. Hence, ${K_1}$ and ${K_2}$ are isotopic.
\end{proof}

\section{Counterexample to the conjecture of Birbrair and Gabrielov}

\begin{theorem}\label{Cor:conjecture-false}
For any non-trivial knot $K$ there exists two LNE surfaces $Y_K, \tilde Y_K \subset \R^4$, with isolated singularity at $0$, such that

\begin{enumerate}
	\item $Y_K$ and $\tilde Y_K$ are outer bi-Lipschitz equivalent;
	\item $Y_K$ and $\tilde Y_K$ are ambient topologically equivalent;
	\item $Y_K$ and $\tilde Y_K$ are not ambient bi-Lipschitz equivalent.
\end{enumerate}
\end{theorem}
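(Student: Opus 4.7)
The strategy is to promote the LNE H\"older triangle $T_{\beta,K}$ from Theorem~\ref{main} to an LNE surface germ with isolated singularity by capping off its boundary, and to produce a second such germ with the same link but only a trivial internal $\beta$-microknot, so that the microknot invariant distinguishes the two ambient-bi-Lipschitz-wise without interfering with outer bi-Lipschitz or ambient topological equivalence. Concretely, fix $\beta\in\F_{>1}$ and a non-trivial knot $K$; by Theorem~\ref{main} combined with a preliminary change of coordinates in $\R^4$, produce two LNE H\"older triangles in $\R^4$ sharing the same boundary arcs $\ell_1,\ell_2$: the first, $T_{\beta,K}$, contains a $\beta$-microknot of type $K$; the second, $T_{\beta,O}$, contains a $\beta$-microknot of type $O$ (the unknot). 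Let $T'_K$ be the ambient trivial $1$-H\"older triangle witnessing the $K$-microknot of $T_{\beta,K}$, so that $Link(T_{\beta,K}\cup T'_K)$ is isotopic to $K$. Choose a definable smooth arc $\delta\subset\mathbb{S}^3$ from $\ell_1\cap\mathbb{S}^3$ to $\ell_2\cap\mathbb{S}^3$, transversal to $\ell_1,\ell_2$ at its endpoints, with the property that the closed curve $\delta\cup Link(T_{\beta,O})$ is isotopic to $K$ in $\mathbb{S}^3$ (an elementary knot-theoretic construction). Finally set
$$Y_K:=T_{\beta,K}\cup T'_K\qquad\text{and}\qquad \tilde Y_K:=T_{\beta,O}\cup\mathrm{Tr}_0(\delta).$$
Each is a definable surface germ with isolated singularity at $0$ and link isotopic to $K$; the LNE property of each follows from the very transversality argument already used at the end of the proof of Theorem~\ref{main}.

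To verify~(1), I would note that $Y_K$ and $\tilde Y_K$ have the same inner bi-Lipschitz type: in both cases, the germ is a copy of the intrinsic triangle $T_{\beta,\cdot}$ (whose inner bi-Lipschitz class is independent of the knot, since all LNE $\beta$-horns are mutually inner bi-Lipschitz) glued to a $1$-H\"older triangle along two $1$-tangent boundary arcs. Patching piecewise inner bi-Lipschitz maps that agree on $\ell_1\cup\ell_2$ yields an inner bi-Lipschitz map $Y_K\to\tilde Y_K$, which is outer bi-Lipschitz since both germs are LNE. For~(2), the two germs have links isotopic to the same knot $K$; combining the Local Conical Structure Theorem with the radial extension of an ambient $\mathbb{S}^3$-isotopy between the two links produces an ambient homeomorphism of $\R^4$ sending $Y_K$ onto $\tilde Y_K$. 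For~(3), suppose for contradiction that $\varphi:\R^4\to\R^4$ is ambient bi-Lipschitz with $\varphi(Y_K)=\tilde Y_K$. Then $\varphi(T_{\beta,K})$ is an LNE $\beta$-H\"older triangle inside $\tilde Y_K$ which continues to realize a $\beta$-microknot of type $K$ (the microknot is an ambient bi-Lipschitz invariant of the embedded H\"older triangle: the $\varphi$-image of an ambient trivial $1$-H\"older triangle is again ambient trivial, and the topological link type is preserved). On the other hand, inside $\tilde Y_K$ the $\beta$-substructure is confined to $T_{\beta,O}$: any two distinct arcs of the $1$-H\"older cap $\mathrm{Tr}_0(\delta)$ are $1$-tangent, and $1<\beta$, so no $\beta$-H\"older triangle in $\tilde Y_K$ may straddle the gluing arcs. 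Hence $\varphi(T_{\beta,K})$ is essentially a subtriangle of $T_{\beta,O}$, and its microknot is inherited as a sub-knot of the unknot link associated with $T_{\beta,O}$ and is therefore trivial. Applying the concluding step of the proof of Theorem~\ref{main} to the ambient bi-Lipschitz identification $T_{\beta,K}\simeq\varphi(T_{\beta,K})\subset T_{\beta,O}$ then forces $K$ to be isotopic to $O$, contradicting the non-triviality of $K$.

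The main obstacle is the rigidity claim of the previous paragraph: that every $\beta$-H\"older triangle inside $\tilde Y_K$ is effectively contained in the $T_{\beta,O}$ piece and thus inherits only the trivial microknot carried by $T_{\beta,O}$. Making this rigorous requires a careful tangency analysis along the gluing arcs $\ell_1,\ell_2$, excluding the possibility of a $\beta$-H\"older triangle ``winding'' across the knotted cap $\mathrm{Tr}_0(\delta)$ and secretly encoding the knotting of $\delta$ itself. Granted this rigidity, the remaining steps follow directly from Theorem~\ref{main} and the framework of Sections~2 and~3.
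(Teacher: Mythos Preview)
Your approach is genuinely different from the paper's, and the paper's route is considerably simpler. The paper takes $\tilde Y_K$ to be the \emph{straight cone} $\mathrm{Tr}_0(\tilde K)$ over the knot, and distinguishes $Y_K=T_{\beta,K}\cup T'$ from $\tilde Y_K$ via Sampaio's tangent-cone invariant \cite{Sampaio-tangent}: since $\beta>1$, the hornified part of $Y_K$ collapses to the line $\ell$ in the tangent cone, so $C_0Y_K$ has unknotted link, whereas $C_0\tilde Y_K=\tilde Y_K$ has link $K$. This finishes (3) in one line, with no rigidity analysis needed.

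Your argument for (3), by contrast, has a genuine gap beyond the rigidity claim you flag. An ambient bi-Lipschitz map $\varphi$ with $\varphi(Y_K)=\tilde Y_K$ need not send $T_{\beta,K}$ onto $T_{\beta,O}$; it sends it to \emph{some} H\"older subtriangle of $\tilde Y_K$. Even granting that the image of the $\beta$-part of $T_{\beta,K}$ sits inside the $\beta$-part of $T_{\beta,O}$, you cannot invoke the concluding step of Theorem~\ref{main}, which requires an ambient bi-Lipschitz equivalence between the specific triangles $T_{\beta,K_1}$ and $T_{\beta,K_2}$, not merely an inclusion of one image inside the other. Moreover, the assertion that any $\beta$-microknot carried by a subtriangle of $T_{\beta,O}$ is ``inherited as a sub-knot of the unknot link and is therefore trivial'' is not justified: in Definition~\ref{Def:microknot} the witness triangle $T'$ lives in $\R^4$, not in $\tilde Y_K$, so for any small $\beta$-subtriangle of $T_{\beta,O}$ one may a priori choose an ambient-trivial $T'$ whose link arc, together with the link arc of that subtriangle, realises \emph{any} prescribed knot. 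Thus ``containing only trivial $\beta$-microknots'' is not an obvious property of $\tilde Y_K$, and your contradiction does not close. The tangent-cone argument sidesteps all of this.
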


\begin{proof}
For a fixed $\beta>1$, let $Y_K=T_{\beta, K} \cup T'$ with $T'$ constructed as in the proof of Theorem \ref{main}, and let $\tilde Y_K =\mathrm{Tr}_0\tilde K=\Theta_1((0,1]\times \tilde K)\cup\{0\}$. Since $Y_K$ is LNE, $Y_K$ contains 1-H\"older triangles $T'$ and $T_{\theta_i}$, $i=1,2$, and $Y_K$ has a link isotopic to $K$, $Y_K$ is outer bi-Lipschitz equivalent to a 1-horn. $\tilde Y_K$ is the cone over a knot isotopic to $K$, and $\tilde Y_K$ is LNE by Proposition \ref{Prop:hornification-LNE}. Therefore $Y_K$ and $\tilde Y_K$ are outer bi-Lipschitz and ambient topologically equivalent, and  isolated singularity at $0$ (formally we should replace $Y_K$ with the 1-horn it is outer bi-Lipschitz equivalent to).

Suppose now that $Y_K$ and $\tilde Y_K$ are ambient bi-Lipschitz equivalent. Then, $C_0 Y_K$ and $C_0 \tilde Y_K$ are ambient bi-Lipschitz equivalent by \cite{Sampaio-tangent}. Since $\beta>1$, the tangent cone of $ X_{\beta,K}$ is $\ell$, and thus $C_0 Y_K$ is $C_0(T_{\theta_1}\cup T_{\theta_2} \cup T')$, which is ambient Lipschitz trivial. On the other hand, $C_0 \tilde Y_K=\tilde Y_K$ (as germs) has the link isotopic to $K$, so it  cannot be trivial. Therefore, $C_0 Y_K$ and $C_0 \tilde Y_K$ are not ambient topologically equivalent, a contradiction.
\end{proof}


\begin{thebibliography}{99}
	
\bibitem{Akbulut-King}
S.~Akbulut and H.~King. 
{\it All knots are algebraic.}.
Commentarii mathematici Helvetici, vol. 56 (1981), 339--351.

\bibitem{birbrair99} 
L.~Birbrair.
{\it Local  bi-Lipschitz  classification  of  2-dimensional  semialgebraic  sets.} 
Houston Journal of Mathematics, vol. 25 (1999), 453--472.

\bibitem{birbrair2014lipschitz} 
L.~Birbrair, A.~Fernandes, A.~Gabrielov and V.~Grandjean. 
{\it Lipschitz contact equivalence of function-germs in {$\mathbb{R}^2$}.} 
Annali SNS Pisa, vol. 17 (2017), 81--92.

\bibitem{withMisha} 
L.~Birbrair, M.~Brandenbursky and A.~Gabrielov.
{\it Lipschitz geometry of surface germs in $\R^4$: metric knots.} 
Selecta Math., New Series, vol. 29 (2023), article no. 43.

\bibitem{extention} 
L.~Birbrair, A.~Fernandes and Z.~Jelonek. 
{\it On the extension of bi-Lipschitz mappings.} 
Selecta Math., New Series, vol. 27 (2021), article no. 15.

\bibitem{GS} 
A.~Gabrielov and E.~Souza. 
{\it Lipschitz geometry and combinatorics of abnormal surface germs.}
Selecta Math., New Series, vol. 28 (2022), article no. 1.


\bibitem{BG:2019} 
L.~Birbrair and A.~Gabrielov. 
{\it Ambient Lipschitz Equivalence of Real Surface Singularities}.
International Mathematics Research Notices, vol. 2019, no. 20, 6347–6361.

\bibitem{BG} L.~Birbrair and A.~Gabrielov. 
{\it Lipschitz geometry of pairs of normally embedded H\"older triangles.} 
European Journal of Mathematics, vol. 8 (2022), 766–791.

\bibitem{BG1} L.~Birbrair and A.~Gabrielov. {\it Outer Lipschitz classification of normal pairs of H\"older triangles.}
Preprint (2023), arXiv:2311.09461.

\bibitem{handbook}
L.~Birbrair and A~Gabrielov. 
{\it Lipschitz Geometry of Real Semialgebraic Surfaces.} 
In: Cisneros-Molina, J.L., Dũng Tráng, L., Seade, J. (eds) Handbook of Geometry and Topology of Singularities IV. Springer, Cham. 2003. pp 449–462.

\bibitem{withDavi} 
D.L.~Medeiros and L.~Birbrair. 
{\it Ambient Lipschitz Geometry of Normally embedded surfaces}
Preprint (2023), arXiv:2311.18570v1.

\bibitem{withRodrigo} L.~Birbrair, R.~Mendes. {\it Arc Criterion of Normal Embedding.} In: Singularities and Foliations. Geometry, Topology and Applications. Springer Proceedings in Mathematics and Statistics. Springer International Publishing 2018, vol~222 , 549--553.

\bibitem{mendes-sampaio}
R.~Mendes and E.~Sampaio 
{\it On the Link of Lipschitz Normally Embedded Sets}. 
International Mathematics Research Notices, vol. 2024, no. 9, 7488–7501.

\bibitem{Nhan:2021}
{N. Nhan.}
{\it Link criterion for Lipschitz Normal Embedding of definable sets}.
Mathematische Nachrichten, 296 (2023), no. 7, 2958-2974.

\bibitem{Sampaio-tangent} 
J. E.~Sampaio.
{\it Bi-Lipschitz homeomorphic subanalytic sets have bi-Lipschitz homeomorphic tangent cones.} 
Selecta Math., New Series, vol. 22 (2016), 553--559.

\bibitem{Sampaio:2023}
{J. E. Sampaio.} 
{\it Local vs. global Lipschitz geometry}.
Preprint (2023), arXiv:2305.11830 [math.MG].

\bibitem{KurdykaOrro97} 
K.~Kurdyka and P.~Orro. 
{\it Distance géodésique sur un sous-analytique}. 
Rev. Mat. Univ. Complut. Madrid, vol. 10 (1997), Special Issue, suppl., 173--182.

\bibitem{LBirbMosto2000NormalEmbedding}
L.~Birbrair and T.~Mostowski. 
{\it Normal embeddings of semialgebraic sets.} 
Michigan Mathematical Journal, vol. 47 (2000), 125--132.

\bibitem{Valette-Link}
G.~Valette. {\it The link of the germ of a semi-algebraic metric space.} 
Proc. Amer. Math. Soc., vol. 135 (2007), 3083--3090.



\end{thebibliography}
\end{document}